\def\mathbbm{\mathbf}
\def\er{\mathbb R}
\def\har{{\mathbb H^1}}
\def\a{{\rm{\bf a}}}
\newcommand{\twoline}[2]{\genfrac{}{}{0pt}{}{#1}{#2}}
\newtheorem{theorem}{Theorem}
\newtheorem{remark}{Remark}
\newtheorem{lemma}[theorem]{Lemma}
\newtheorem*{lemma*}{Lemma}
\newtheorem{corollary}[theorem]{Corollary}
\newtheorem*{corollary*}{Corollary}
\newtheorem{definition}[theorem]{Definition}
\newtheorem*{definition*}{Definition}
\newtheorem*{theorem*}{Theorem}
\author[M. Paluszynski]{Maciej Paluszynski}
\author[J. Zienkiewicz]{Jacek Zienkiewicz}
\title[ ]{Dimension independent atomic decomposition for dyadic martingale $\har$}
\begin{document}
\begin{abstract}
We introduce atoms for dyadic atomic $\mathbb H^1$ for which the equivalence between atomic and maximal function definitions is dimension independent.
We give the sharp, up to $\log(d)$ factor, estimates for the $\har\rightarrow L^1$ norm estimates
for  the special maximal function.
\end{abstract}
\keywords{Hardy space, atomic decomposition, dyadic martingale}
\subjclass[2010]{42B25, 60G46}
\date{\today}
\maketitle

\baselineskip=18pt

We define a martingale $\har$ space on $\er^d$
\begin{gather}\label{m_h1}
  M^*f=\sup_n|\mathbb E_nf|, \\
  \|f\|_\har=\|M^*f\|_{L^1},\notag
\end{gather}
where $\mathbb E_n$ is the conditional expectation operator associated with the dyadic grid of scale $2^n$.
There are various equivalent definitions of $\har$. In particular,
it has been proved in \cite{D} that an equivalent norm can be defined by
\begin{gather}\label{s_h1}
  S^*f=\big(\sum_n|\mathbb E_nf- \mathbb E_{n+1}f|^2\big)^\frac12, \\
  \|f\|_\har\sim\|S^*f\|_{L^1},\notag
\end{gather}
with the equivalence constants independent of $d$. Similarly as in the Euclidean case, the atomic
decompositions of  martingale $\har$ have been proved based either on the maximal function or the
square function definitions, see \cite{W}. We note that although the atomic norm obtained in
\cite{W} (based on the atomic decomposition) is equivalent to the maximal norm for any single $d$,
the equivalence constants depend on $d$. The aim of this short note is to fine tune the definition
of atoms, for which the equivalence between atomic and maximal function norms is uniform in $d$. By
the results of \cite{D}, the same decomposition works for the square function \eqref{s_h1} norm.

The motivation for our results is their possible applications.  We note that the proposed atomic
decomposition can be used to obtain dimension explicit estimates for various classical operators
acting on martingale $\har$.  In this note we apply Theorem \ref{theorem1} to estimate the
$\har\rightarrow L^1$ norm of special radial  maximal function modeling classical Hardy-Littlewood
maximal oparator. Similar argument works for the heat semigroup, see remark at the end of the
paper. We are going to address further questions, in particular dimension explicit estimates for
classical SIO, in the future, see Remark \ref{rmrk}.

The study of dimension dependence of classical estimates is not new, see papers \cite{NT},
\cite{ST}, which have motivated this current research.

We define atoms.
\begin{definition}\label{Def}
A function $a_Q$ on $\er^d$ is an atom associated with a dyadic cube $Q$ if
\begin{enumerate}[(a)]
\item $\int a_Q=0$, ${\rm supp}\, a_Q\subset Q$,
\item $\|a_Q\|_{L^1}\le 1$,
\item $\|a_Q\|_{L^\infty}\le\frac{2^{d+1}}{|Q|}$, $d$ - dimension,
\item we have a decomposition
\begin{equation*}
  \Big\{x:|a_Q(x)|>\frac{1}{|Q|}\Big\}\subset\bigcup_sQ_s,
\end{equation*}
where $Q_s$ are essentially disjoint dyadic cubes, $Q_s\subset Q$, satisfying the following two conditions:
\begin{itemize}
\item for $Q_s^\#$ being the dyadic parent of $Q_s$ (one scale above)
\begin{equation*}\label{qsets}
  \frac{1}{|Q_s^\#|}\Big|\int_{Q_s^\#}a_Q(x)\,dx\Big|\le\frac{C}{|Q|},
\end{equation*}
where constant $C$ is independent of the dimension $d$,
\item $a_Q$ is constant on each $Q_s$.
\end{itemize}
\end{enumerate}
\end{definition}
\begin{lemma} For an atom $a_Q$ we have
\begin{equation*}
  \|a_Q\|_\har\le C,
\end{equation*}
where the constant $C$ is independent of the dimension $d$.
\end{lemma}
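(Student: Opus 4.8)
The plan is to estimate $\|a_Q\|_{\har}=\|M^*a_Q\|_{L^1}$ directly, using condition (d) to show that $M^*a_Q$ is large only on the set where $a_Q$ itself is already large, a set controlled by the $L^1$-bound (b). Write $a=a_Q$. We may assume $a\not\equiv0$, so that every $Q_s$ is a \emph{proper} subcube of $Q$ (a cube of the family equal to $Q$ would, by constancy of $a$ on it and $\int a=0$, force $a\equiv0$); since the $Q_s$ are essentially disjoint no two of them are nested, which is all we use about their disjointness. First, $M^*a$ is supported in $Q$: if a dyadic cube $R$ is disjoint from $Q$ or contains $Q$ then $\int_R a=0$ by (a), so $\mathbb E_n a$ vanishes outside $Q$ for every $n$, whence $\|a\|_{\har}=\int_Q M^*a$. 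Note that the crude estimate $\int_Q M^*a\le|Q|^{1/2}\|M^*a\|_{L^2}\le 2|Q|^{1/2}\|a\|_{L^2}$ is useless here, since (c) only controls $\|a\|_{L^\infty}$ with the factor $2^{d+1}/|Q|$ and this route would lose a factor $2^{d/2}$; condition (d) is exactly what removes this loss.

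Call a dyadic cube $R\subseteq Q$ \emph{admissible} if it is neither equal to nor strictly contained in any $Q_s$, equivalently if every $Q_s$ is either contained in $R$ or disjoint from $R$. The heart of the matter is the claim that there is a dimension-free constant $C_1$ (one may take $C_1=C+2$, with $C$ the constant from (d)) such that
\[
  \frac1{|R|}\,\Big|\int_R a\Big|\ \le\ \frac{C_1}{|Q|}\qquad\text{for every admissible }R .
\]
Here is the argument I would use. For admissible $R$, every $Q_s$ that meets $R$ is contained in $R$, so $\int_R a=\int_{R\setminus\bigcup_sQ_s}a+\sum_{Q_s\subseteq R}\int_{Q_s}a$, the first term having modulus at most $|R|/|Q|$ because $|a|\le1/|Q|$ off $\bigcup_sQ_s$ (by the covering in (d)). If $R$ is a \emph{parent cube}, that is $R=Q_s^\#$ for some $s$, then (d) bounds $|\int_R a|$ by $C|R|/|Q|$ and we are done. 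If $R$ is not a parent cube, then no child of $R$ is a member of the family, so the parent $Q_s^\#$ of each $Q_s\subseteq R$ is a proper admissible subcube of $R$; writing $\mathcal P$ for the pairwise disjoint family of maximal cubes among $\{Q_s^\#:Q_s\subseteq R\}$ and grouping the cubes $Q_s\subseteq R$ according to the $P\in\mathcal P$ that contains them, we note each such $P$ is admissible and a parent cube, so applying the decomposition above to $P$ and (d) on $P$ gives $\big|\sum_{Q_s\subseteq P}\int_{Q_s}a\big|\le|\int_P a|+|P|/|Q|\le(C+1)|P|/|Q|$; summing over the disjoint cubes $P\subseteq R$ yields $\big|\sum_{Q_s\subseteq R}\int_{Q_s}a\big|\le(C+1)|R|/|Q|$, and the claim follows. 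The point of the grouping is to trade a sum over the up to $2^d$ children of a cube, which would be expensive through (c), for the single dimension-free average supplied by (d).

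Granting the claim, the structure of $M^*a$ on $Q$ is transparent. If $x\in Q\setminus\bigcup_sQ_s$, every dyadic cube containing $x$ is admissible or contains $Q$, hence $M^*a(x)\le C_1/|Q|$. If $x\in Q_{s_0}$, then dyadic $R\ni x$ with $R\subseteq Q_{s_0}$ give $\mathbb E_n a(x)=c_{s_0}$, the constant value of $a$ on $Q_{s_0}$ (by (d)); those with $Q_{s_0}\subsetneq R\subsetneq Q$ are admissible, since two disjoint dyadic cubes are never nested; and those with $R\supseteq Q$ give $0$; so $M^*a(x)\le|c_{s_0}|+C_1/|Q|$. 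Integrating and using $\sum_s|Q_s|\le|Q|$ together with $\sum_s|c_s|\,|Q_s|=\sum_s\int_{Q_s}|a|\le\|a\|_{L^1}\le1$ by (b),
\[
  \|a\|_{\har}=\int_Q M^*a\ \le\ C_1+\sum_s|c_s|\,|Q_s|+C_1\ \le\ 2C_1+1 ,
\]
a bound independent of $d$.

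The one genuinely delicate step, and the one I expect to demand the most care, is the displayed claim: propagating the $O(1/|Q|)$ control of the averages of $a$ from the cubes $Q_s^\#$ upward to \emph{all} admissible cubes with no dimensional loss. Everything else is elementary bookkeeping with nested dyadic cubes and the $L^1$-normalization of the atom.
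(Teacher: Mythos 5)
Your proof is correct and follows essentially the same route as the paper's: your bound on $\frac1{|R|}|\int_R a|$ for admissible $R$, obtained by grouping the black cubes under the maximal parents $Q_s^\#\subset R$ and invoking (d) there together with the bound $|a|\le 1/|Q|$ off the black cubes, is exactly the paper's step of replacing $a$ on those maximal parents by its averages. The conclusion $M^*a\le C/|Q|+|a|$ followed by integration against (b) is likewise the paper's.
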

\begin{proof} Pick a dyadic cube $\tilde Q\subset Q$. We need to compute the average of $a_Q$ over $\tilde Q$.
Suppose  $\tilde Q$ is a cube other than any of the $Q_s$'s. Let $\{Q^\#_j\}$ be a family of all
maximal $Q_s^\#\subset \tilde Q$. We denote by $\langle f\rangle_R$ the average of $f$ over a set $R$. Then $\langle a_Q\rangle_{\tilde Q}=\langle a^\#\rangle_{\tilde Q}$, where
$a^\#$ is obtained from $a_Q$ by replacing its value on $Q_j^\#$ by the constant $\langle a_Q\rangle_{Q^\#_j}$.
By  Definition \ref{Def} (d) we have $|a^\#|\le \frac{C}{|Q|}$ ($C$ independent of the dimension). Now suppose  $\tilde Q$ is one of the $Q_s$'s. Then $a_Q$ is constant on $\tilde Q$ and averaging leaves its value unchanged. Hence $M^*a_Q(x)\le \frac C{|Q|}+|a(x)|$ and the desired $L^1$
estimate follows by Definition \ref{Def} (b).
\end{proof}
\begin{remark} If we remove condition \eqref{qsets} from Definition \ref{Def} and assume
the $L^\infty$ estimate (c) on the entire $Q$, the  statement of the above lemma will remain true, but with linear dependence of the implied constant on the dimension $d$.
In order to see this, one has to use $\|M^*\|_{L^p\rightarrow L^p}\le \frac{C}{p-1}$ for
$p=1+\frac1d$.
\end{remark}
We note, that conditions (b) and (c) of the definition of the atom  are suitable for application of the near-$L^1$ estimates.
This approach however, do not seem to lead for the optimal bounds, see Remark \ref{rota}.

\begin{remark} It seems of interest to find the dimensional statement of the result from \cite{PZ}.
\end{remark}

\begin{theorem}[Atomic decomposition]\label{theorem1} For $f\in\har$ there exist a sequence of atoms $\{a_{Q_i}\}$ and a sequence of constants $\{\lambda_i\}$ such that
\begin{equation*}
  f=\sum_i\lambda_ia_{Q_i}\qquad\text{in }\har,
\end{equation*}
and
\begin{equation*}
  \sum_i|\lambda_i|\le\|f\|_\har.
\end{equation*}
\end{theorem}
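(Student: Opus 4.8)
The plan is to use the classical Calderón–Zygmund stopping-time construction adapted to the dyadic martingale setting, taking care at each stage to track how the atom-conditions (a)–(d) of Definition \ref{Def}, and in particular the dimension-independent condition \eqref{qsets}, come out of the stopping procedure. First I would set $\Omega_k=\{x:M^*f(x)>2^k\}$ for $k\in\mathbb Z$; this is an open set which, being a level set of a dyadic maximal function, decomposes into a union of maximal dyadic cubes $\Omega_k=\bigcup_j Q_{k,j}$. By maximality of $Q_{k,j}$, its dyadic parent $Q_{k,j}^\#$ is not contained in $\Omega_k$, so the average of $f$ over $Q_{k,j}^\#$ is bounded by $2^k$ in absolute value; this is the elementary fact that will ultimately produce \eqref{qsets}. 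Standard martingale theory gives $\sum_{k,j}|Q_{k,j}|2^k\lesssim\|M^*f\|_{L^1}=\|f\|_\har$, which is the reservoir for the coefficient bound $\sum_i|\lambda_i|\le\|f\|_\har$ (after absorbing constants — one may need to rescale $2^k$ by the implied constant, or argue more carefully to get the clean constant $1$).

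Next I would write the martingale difference decomposition $f=\sum_k (f_k - f_{k-1})$ where $f_k$ denotes the "good part at level $k$" obtained by truncating/averaging $f$ on the stopping cubes $Q_{k,j}$: concretely, set $g_k = \mathbb E(f\mid \text{the grid refined so that each }Q_{k,j}\text{ is an atom of the }\sigma\text{-algebra})$, and let the $k$-th piece of $f$ be supported on $\Omega_k\setminus\Omega_{k+1}$ up to the usual telescoping. Grouping the contributions cube by cube, for each maximal cube $Q=Q_{k,j}$ one forms a candidate atom $b_Q$ equal to $f$ minus its average on $Q$, but with $f$ replaced by the coarser conditional expectation outside the next-level stopping cubes $Q_{k+1,i}\subset Q$; one sets $\lambda_Q\sim 2^k|Q|$ and $a_Q=b_Q/\lambda_Q$. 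The cubes $Q_s$ appearing in Definition \ref{Def}(d) are exactly these next-generation stopping cubes $Q_{k+1,i}\subset Q$: on their complement inside $Q$ the function $a_Q$ is a conditional expectation controlled by $2^{k+1}$, giving the $L^\infty$ bound (c) with the dimensional factor $2^{d+1}$ coming from comparing $|Q|$ with $|Q_{k+1,i}^\#|$ (one dyadic scale is a factor $2^d$ in volume); on each $Q_s$ we force $a_Q$ to be the constant $\langle a_Q\rangle_{Q_s}$, and since $Q_s^\#\subset Q$ is not in $\Omega_{k+1}$ unless it contains a stopping cube, the average $\frac{1}{|Q_s^\#|}|\int_{Q_s^\#}a_Q|$ is comparable to $2^{k+1}/\lambda_Q\sim C/|Q|$ — this is precisely \eqref{qsets}, and it is dimension-free because it only uses the "one scale up" non-membership in the level set, never a volume comparison across many scales.

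The main obstacle I anticipate is twofold. First, getting the constant in the coefficient estimate down to exactly $\|f\|_\har$ (rather than $C\|f\|_\har$): the honest dyadic argument naturally gives $\sum 2^k|\Omega_k|\le 2\|M^*f\|_1$ or similar, so one must either be slightly cleverer in the stopping (e.g. sum over $\Omega_k$ with a careful telescoping of $M^*f$ itself, using $\|M^*f\|_1=\int_0^\infty|\{M^*f>t\}|\,dt$ and a dyadic discretization that loses nothing) or accept that the cited theorem is really an "up to absolute constant" statement and the normalization has been pushed into the definition of atom via the generous bounds in (b), (c). Second, and more delicate, is verifying \eqref{qsets} uniformly: one must check that every $Q_s^\#$ that arises is genuinely "just outside" the relevant level set, which requires organizing the $Q_s$ as the maximal subcubes of $Q$ on which $|a_Q|$ exceeds the threshold $1/|Q|$ and matching this threshold to the stopping level $2^{k+1}\lambda_Q^{-1}$; the bookkeeping that the constant here does not secretly accumulate a factor depending on how many generations of stopping occur is the crux, and it is handled by noting the decomposition is "one-step" — each atom sees only its own level and the next, with the deeper structure absorbed into lower-level atoms. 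Finally I would verify convergence $f=\sum_i\lambda_i a_{Q_i}$ in $\har$ by the standard argument that the partial sums are the martingale truncations $f\mathbf 1_{|f|\le N}$-type approximants whose maximal functions converge in $L^1$ by dominated convergence against $M^*f$.
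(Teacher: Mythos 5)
Your skeleton --- stopping cubes, a telescoping decomposition into mean-zero pieces supported on them, and the coefficient bound via $Q_{k,j}\subset\{M^*f>2^k\}$ together with essential disjointness at each fixed level --- is the same architecture as the paper's proof (the paper stops on relative thresholds $2^{R+2}$ attached to each cube rather than on the absolute level sets $\Omega_k$, but that difference is cosmetic, and your worry about the constant $1$ versus an absolute constant $C$ is legitimate: the paper's own proof only yields $C\|M^*f\|_{L^1}$). The genuine gap is in your verification of condition (c), and it sits exactly where the paper does its one nontrivial piece of work. You assert that the value of the atom on a stopping cube $Q_s$ is controlled because "one dyadic scale is a factor $2^d$ in volume", i.e.\ implicitly $|\langle f\rangle_{Q_s}|\lesssim 2^d|\langle f\rangle_{Q_s^\#}|$. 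This is false: the stopping controls $|\langle f\rangle_{Q_s^\#}|$ and the averages over the \emph{non-stopping} siblings, but the integral of $f$ over an individual stopping child can be arbitrarily large through cancellation among siblings (take $f=M$ on one child and $-M$ on another with $M$ huge; the parent average is tiny, both children are stopping cubes, and the constant value of your atom on each violates any bound of the form $2^{d+1}/|Q|$). What the stopping gives you is only a bound on the \emph{sum} $\sum_k|Q_k|\langle f\rangle_{Q_k}$ over all stopping children of a common parent $Q^\#$, namely $\le |Q^\#|2^{R+3}$, obtained by subtracting the controlled non-stopping siblings from the controlled parent.

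Consequently the pre-atom must be further modified: the paper replaces the (individually unbounded) values on the stopping children of each $Q^\#$ by their common average $\tilde C$, which the sum bound controls by $2\cdot 2^d/|Q|$, and splits off the discrepancy $b=a-\tilde a$ as a separate \emph{correction atom} --- mean zero on $Q^\#$, constant on each child, supported on the union of the stopping siblings, with $\|b\|_{L^1}\le 2\|a\|_{L^1(\cup Q_k)}$, so that the correction atoms have disjoint supports and contribute only a bounded amount of extra coefficient mass. Your proposal contains no analogue of this step, and it cannot be absorbed into the next generation as you suggest ("force $a_Q$ to be the constant $\langle a_Q\rangle_{Q_s}$ and push the error down"): altering the value on $Q_s$ destroys the mean-zero telescoping, since the generation-$(k+1)$ pre-atom on $Q_s$ is built from $f-\langle f\rangle_{Q_s}$ and would no longer integrate to zero if the level-$k$ atom carries $\tilde C$ instead of $\langle a_Q\rangle_{Q_s}$. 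Your verification of the parent-average condition in (d) is fine (it only uses $|\langle f\rangle_{Q_s^\#}|\le 2^{k+1}$ and the fact that averaging on children preserves integrals over the parent), but without the correction-atom step the decomposition does not produce atoms in the sense of Definition~\ref{Def}.
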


\begin{remark} A similar atomic decomposition can be obtained in a similar, direct way using the square function definition of $\har$. We omit the details.
\end{remark}

\begin{proof}
Clearly, we can assume that
\begin{equation*}
  \int f=0,\qquad{\rm supp}\,f\subset Q,\quad |Q|=1.
\end{equation*}
We let the family (finite or infinite) $\{Q_{i_1}\}_{i_1}$ consist of the maximal dyadic subcubes of $Q$, for which the average of $f$ is non-zero. By differentiation, a.e outside of the union of $\{Q_{i_1}\}_{i_1}$ we have $f=0$.
We will define inductively consecutive generations of subcubes. The first generation is the family $\{Q_{i_1}\}_{i_1}$. We now construct the second generation of subcubes $\{Q_{i_1,i_2}\}_{i_2}\subset Q_{i_1}$. Recall, that we denote by $\langle f\rangle_Q$ the average of $f$ over $Q$. Let an integer $R(i_1)$ be defined by
\begin{equation*}
  2^{R(i_1)}\le|\langle f\rangle_{Q_{i_1}}|<2^{R(i_1)+1},\quad i_1=1,2,\dots
\end{equation*}
The cubes $Q_{i_1,i_2}$ are the maximal subcubes of $Q_{i_1}$ for which
\begin{equation*}
  |\langle f\rangle_{Q_{i_1,i_2}}|\ge 2^{R(i_1)+2}.
\end{equation*}
In other words $\{Q_{i_1,i_2}\}$ are the moments of the first ``break" through the level $2^{R(i_1)+2}$. We define integers $\alpha(i_1,i_2)$ and $R(i_1,i_2)$ by
\begin{gather*}
  |Q_{i_1,i_2}|=2^{-\alpha(i_1,i_2)}, \\
  2^{R(i_1,i_2)}\le|\langle f\rangle_{Q_{i_1,i_2}}|<2^{R(i_1,i_2)+1}.
\end{gather*}
We iterate the procedure for each of the cubes $Q_{i_1,i_2}$, and we obtain a family of cubes
\begin{equation*}
  \{Q_{i_1,i_2,\dots,i_l}\}_{l=1,2,\dots}
\end{equation*}
We now write the decomposition of $f$ into ``pre-atoms"
\begin{align*}
  f &=-\mathbbm{1}_{Q}\langle f\rangle_{Q}+f\cdot\mathbbm{1}_{Q\setminus\bigcup_{i_1}Q_{i_1}} +\sum_{i_1}\mathbbm{1}_{Q_{i_1}}\langle f\rangle_{Q_{i_1}}+\\
  &\quad+\sum_{i_1}\Big(-\mathbbm{1}_{Q_{i_1}}\langle f\rangle_{Q_{i_1}}+f\cdot\mathbbm{1}_{Q_{i_1}\setminus\bigcup_{i_2}Q_{i_1,i_2}}+\sum_{i_2}\mathbbm{1}_{Q_{i_1,i_2}}
  \langle f\rangle_{Q_{i_1,i_2}}\Big)+ \\
   &\quad+\sum_{i_1,i_2}\Big(-\mathbbm {1}_{Q_{i_1,i_2}}\langle f\rangle_{Q_{i_1,i_2}}+f\cdot\mathbbm{1}_{Q_{i_1,i_2}\setminus\bigcup_{i_3}Q_{i_1,i_2,i_3}}+\\
   &\qquad\qquad+\sum_{i_3}\mathbbm{1}_{Q_{i_1,i_2,i_3}} \langle f\rangle_{Q_{i_1,i_2,i_3}}\Big)+ \\
   &\quad\dots\\
   &\quad+\sum_{i_1,i_2,\dots,i_s}\Big(-\mathbbm {1}_{Q_{i_1,\dots,i_s}}\langle f\rangle_{Q_{i_1,\dots,i_s}}+f\cdot\mathbbm{1}_{Q_{i_1,\dots,i_s}\setminus\bigcup_{i_{s+1}}Q_{i_1,\dots,i_{s+1}}} +\\ &\qquad\qquad+\sum_{i_{s+1}}\mathbbm{1}_{Q_{i_1,\dots,i_{s+1}}} \langle f\rangle_{Q_{i_1,\dots,i_{s+1}}}\Big)+ \\
   &\quad\dots
\end{align*}
We call ``pre-atoms" associated with dyadic cubes $Q_{i_1,\dots,i_s}$ the functions $a_{Q_{i_1,\dots,i_s}}$, which are the normalized elements of the above decomposition
\begin{equation*}
  a_{Q_{i_1,\dots,i_s}}=\frac{\omega_{Q_{i_1,\dots,i_s}}}{\lambda_{Q_{i_1,\dots,i_s}}},
\end{equation*}
where
\begin{align*}
  \omega_{Q_{i_1,\dots,i_s}}&=-\mathbbm {1}_{Q_{i_1,\dots,i_s}}\langle f\rangle_{Q_{i_1,\dots,i_s}}+\\
   &\qquad+f\cdot \mathbbm{1}_{Q_{i_1,\dots,i_s}\setminus\bigcup_{i_{s+1}}Q_{i_1,\dots,i_{s+1}}}+\\
   &\qquad+\sum_{i_{s+1}}\mathbbm{1}_{Q_{i_1,\dots,i_{s+1}}} \langle f\rangle_{Q_{i_1,\dots,i_{s+1}}},
\end{align*}
and
\begin{align*}
  \lambda_{Q_{i_1,\dots,i_s}}&=2^{R(i_1,\dots,i_s)+1}|Q_{i_1,\dots,i_s}|+\\
  &\qquad+2^{R(i_1,\dots,i_s)+2}|Q_{i_1,\dots,i_s}|+\\
  &\qquad+\sum_{i_{s+1}}2^{R(i_1,\dots,i_{s+1})+1}|Q_{i_1,\dots,i_{s+1}}|.
\end{align*}
We include in the above the first ''pre-atom" of the decomposition
\begin{equation*}
  \omega_Q=f\cdot\mathbbm{1}_{Q\setminus\bigcup_{i_1}Q_{i_1}} +\sum_{i_1}\mathbbm{1}_{Q_{i_1}}\langle f\rangle_{Q_{i_1}}, \qquad a_Q=\frac{\omega_Q}{\lambda_Q},
\end{equation*}
where
\begin{equation*}
  \lambda_Q=1+\sum_{i_1}2^{R(i_1)}|Q_{i_1}|.
\end{equation*}
We immediately obtain
\begin{itemize}
\item $\int a_{Q_{i_1,\dots,i_s}}=0$,\ ${\rm supp}\,a_{Q_{i_1,\dots,i_s}}\subset Q_{i_1,\dots,i_s}$,
\item the decomposition
\begin{equation*}
  f=\sum_{s=1}^\infty\sum_{i_1,\dots, i_s}\lambda_{Q_{i_1,\dots,i_s}}\cdot a_{Q_{i_1,\dots,i_s}},\quad\text{in $\har$}.
\end{equation*}
\end{itemize}
Observe, that by the definition of $R(i_1,\dots,i_s)$ we have
\begin{equation*}
    2^{R(i_1,\dots,i_s)}\le|\langle f\rangle_{Q_{i_1,\dots,i_s}}|<2^{R(i_1,\dots,i_s)+1},
\end{equation*}
and similarly for the cubes $Q_{i_1,\dots,i_{s+1}}$ ( $R(i_1,\dots,i_s)$
 replaced by $R(i_1,\dots,i_{s+1})$). On the cube $Q_{i_1,\dots,i_s}$, outside $\bigcup_{i_{s+1}}Q_{i_1,\dots,i_{s+1}}$ we have
\begin{equation*}
  |f|\le 2^{R(i_1,\dots,i_s)+2}
\end{equation*}
(from the definition of $Q_{i_1,\dots,i_{s+1}}$ and differentiation of integrals).
Further, for $x\in Q_{i_1,\dots,i_s}\setminus\bigcup_{i_{s+1}}Q_{i_1,\dots,i_{s+1}}$
\begin{equation*}
  |a_{Q_{i_1,\dots,i_s}}(x)|\le\frac{2^{R(i_1,\dots,i_s)+1}+2^{R(i_1,\dots,i_s)+2}}{\lambda_{Q_{i_1,\dots,i_s}}}\le\frac{1}{|Q_{i_1,\dots,i_s}|},
\end{equation*}
while for $x\in Q_{i_1,\dots,i_{s+1}}$
\begin{equation*}
  |a_{Q_{i_1,\dots,i_s}}(x)|\le\frac{|\langle f\rangle_{Q_{i_1,\dots,i_{s+1}}}-\langle f\rangle_{Q_{i_1,\dots,i_s}}|}{\lambda_{Q_{i_1,\dots,i_s}}}
  \le\frac{1}{|Q_{i_1,\dots,i_s}|}.
\end{equation*}
Thus, we obtain
\begin{equation*}
  \|a_{Q_{i_1,\dots,i_s}}\|_{L^1}\le1.
\end{equation*}

Observe
\begin{align*}
  \sum_{s=1}^\infty\sum_{i_1,\dots,i_s} |\lambda_{Q_{i_1,\dots,i_s}}|& = \sum_{s=1}^\infty\sum_{i_1,\dots,i_s}\big(2^{R(i_1,\dots,i_s)+1}+2^{R(i_1,\dots,i_s)+2}\big)|Q_{i_1,\dots,i_s}|+\\
   &\qquad+\sum_{s=1}^\infty\sum_{i_1,\dots,i_{s+1}}2^{R(i_1,\dots,i_{s+1})+1}|Q_{i_1,\dots,i_{s+1}}|\\
   &\le C\sum_{s=1}^\infty\sum_{i_1,\dots,i_{s}}2^{R(i_1,\dots,i_{s})+1}|Q_{i_1,\dots,i_{s}}|\\
   &=C\sum_{k=1}^\infty 2^k\sum_{\twoline{s=1,2,\dots}{i_1,\dots,i_s}\atop{R(i_1,\dots,i_s)}=k}|Q_{i_1,\dots,i_{s}}|\\
   &=(**)
\end{align*}
where the constant $C$ is absolute. We make the following 2 observations:
\begin{enumerate}[(a)]
\item for a fixed $R(i_1,\dots,i_s)=R(j_1,\dots,j_t)$ the cubes $Q_{i_1,\dots,i_s}$ and $Q_{j_1,\dots,j_t}$ are essentially disjoint. This follows, since if they weren't essentially disjoint, one would have to contain the other, which is impossible (unless, of course, they are the same cube).
\item we have
\begin{equation*}
  Q_{i_1,\dots,i_s}\subset\big\{x:M^*f(x)>2^{R(i_1,\dots,i_s)}\big\}
\end{equation*}
\end{enumerate}
so
\begin{equation*}
  (**)\le C\sum_{k=1}^\infty 2^k\big|\big\{x:M^*f(x)>2^k\big\}\big|\le C\|M^*f\|_{L^1},
\end{equation*}
with the constant $C$ absolute.
We will  further decompose the $a_{Q_{i_1,\dots,i_s}}$.

Let $Q_{i_1,\dots,i_{s+1}}^\#$ be the dyadic, immediate, parent of $Q_{i_1,\dots,i_{s+1}}$.
The cubes $Q_{i_1,\dots,i_{s+1}}^\#$ need not to be disjoint, so we fix the maximal one. By the construction
 it is contained in $Q_{i_1,\dots,i_s}$.
We denote by $Q_1,\dots,Q_n$ those cubes among the immediate dyadic descendants of
$Q_{i_1,\dots,i_{s+1}}^\#$ which belong to the set $\{Q_{i_1,\dots,i_{s+1}}\}_{s+1}$, while we denote by $Q_{n+1},\dots,Q_{2^d}$ the remaining descendants ($0<n\le 2^d$). Since the cube  $Q_{i_1,\dots,i_{s+1}}^\#$ is not one of the chosen cubes, thus
\begin{equation*}
  \big|\langle f\rangle_{Q_{i_1,\dots,i_{s+1}}^\#}\big|<2^{R(i_1,\dots,i_s)+2}.
\end{equation*}
Similarly,
\begin{align*}
  \big|\langle f\rangle_{Q_k}\big|<2^{R(i_1,\dots,i_s)+2}&\qquad k=n+1,\dots,2^d,\\
  \big|\langle f\rangle_{Q_k}\big|\ge2^{R(i_1,\dots,i_s)+2}&\qquad k=1,\dots,n.
\end{align*}
We have
\begin{align*}
  \sum_{k=1}^n|Q_k|\langle f\rangle_{Q_k} &=|Q_{i_1,\dots,i_{s+1}}^\#|\langle f\rangle_{Q_{i_1,\dots,i_{s+1}}^\#} -\sum_{k=n+1}^{2^d}|Q_k|\langle f\rangle_{Q_k}\\
  &\le|Q_{i_1,\dots,i_{s+1}}^\#|\cdot2^{R(i_1,\dots,i_s)+2}+\sum_{k=n+1}^{2^d}|Q_k|\cdot2^{R(i_1,\dots,i_s)+2}\\
  &\le|Q_{i_1,\dots,i_{s+1}}^\#|\cdot2^{R(i_1,\dots,i_s)+3}.
\end{align*}
Let us denote
\begin{equation*}
  C_k=a_{Q_{i_1,\dots,i_s}}\cdot\mathbbm{1}_{Q_k},\qquad k=1,\dots,n,
\end{equation*}
and we obtain
\begin{equation*}
  |\sum_{k=1}^n C_k|\le\frac{2^d\cdot2^{R(i_1,\dots,i_s)+3}}{\lambda_{Q_{i_1,\dots,i_s}}}\le\frac{2\cdot2^d}{|Q_{i_1,\dots,i_s}|}.
\end{equation*}
Let
\begin{equation*}
  \tilde C=\frac{1}{n}\,\sum_{k=1}^n C_k,
\end{equation*}
and let us adjust the value of the pre-atom $a_{Q_{i_1,\dots,i_s}}$ on cubes $Q_k$ from $C_k$ to $\tilde C$ ($k=1,\dots,n$). Call the adjusted pre-atom $\tilde a_{Q_{i_1,\dots,i_s}}$. Observe that the new functions have the same support, the same mean, and the $L^1$ norm is still $\le1$. Additionally, the new functions satisfy:
\begin{itemize}
\item outside $\bigcup_{i_{s+1}}Q_{i_1,\dots,i_{s+1}}$ we have
\begin{equation*}
  |\tilde a_{Q_{i_1,\dots,i_s}}(x)|\le\frac{1}{|Q_{i_1,\dots,i_s}|},
\end{equation*}
\item on $\bigcup_{i_{s+1}}Q_{i_1,\dots,i_{s+1}}$ we have
\begin{equation*}
  |\tilde a_{Q_{i_1,\dots,i_s}}(x)|\le\frac{2\cdot 2^d}{|Q_{i_1,\dots,i_s}|},
\end{equation*}
\item
\begin{equation*}
  \Big\{x:|\tilde a_{Q_{i_1,\dots,i_s}}(x)|\ge\frac{1}{|Q_{i_1,\dots,i_s}|}\Big\}\subset\bigcup_{i_{s+1}}Q_{i_1,\dots,i_{s+1}},
\end{equation*}
while each cube $Q_{i_1,\dots,i_{s+1}}$ is contained in appropriate parent $Q^\#= Q_{i_1,\dots,i_{s+1}}^\#$, for which we have
\begin{align*}
  &\frac{1}{|Q^\#|}\Big|\int_{Q^\#}\tilde a_{Q_{i_1,\dots,i_s}}(x)\,dx\Big|=\frac{1}{|Q^\#|}\Big|\sum_{k=1}^n\int_{Q_k}\tilde a_{Q_{i_1,\dots,i_s}}(x)\,dx+ \\
  &\qquad\qquad\qquad+\sum_{k=n+1}^{2^d}\int_{Q_k}\tilde a_{Q_{i_1,\dots,i_s}}(x)\,dx\Big|\\
  &\qquad\qquad=\frac{1}{|Q^\#|}\Big|\sum_{k=1}^n\tilde C+\sum_{k=n+1}^{2^d}\int_{Q_k}\tilde a_{Q_{i_1,\dots,i_s}}(x)\,dx\Big|\\
  &\qquad\qquad=\frac{1}{|Q^\#|}\Big|\int_{Q^\#} a_{Q_{i_1,\dots,i_s}}(x)\,dx\Big|.
\end{align*}
\end{itemize}
Denote by $b= b_{Q_{i_1,\dots,i_{s+1}}^\#}=a_{Q_{i_1,\dots,i_s}}-\tilde a_{Q_{i_1,\dots,i_s}}$.

Observe that $b\ne 0$
only on $S=\bigcup_{k=1}^{n}Q_k$.
By the construction we have $\|b\|_{L^1(S)}\le 2\|a_{Q_{i_1,\dots,i_s}} \|_{L^1(S)}$.
 Moreover, since  $b$ is constant on each $Q_k, 1\le k \le n$ it satisfies
 $|Q_{i_1,\dots,i_{s+1}}^\#|\|b\|_{L^\infty}\le 2^d \|b\|_{L^1}$
 and hence $\frac{b}{\|b\|_{L^1}}$ is an atom in the sense of Definition \ref{Def}.

Now we consider one of cubes $Q_{n+1},...,Q_{2^d}$, say $Q=Q_{n+1}$. Let again $Q^\#=Q_{i_1,\dots,i_{s+1}}^\#$
be  maximal contained in $Q$. We repeat the above procedure to $Q^\#$ and obtain new atom
$\tilde \tilde a_{Q_{i_1,\dots,i_s}}$ obtained by subsequent modification of $\tilde a_{Q_{i_1,\dots,i_s}}$
on subcubes of the new cube $Q^\#$ and atom $b_1= b_{Q_{i_1,\dots,i_{s+1}}^\#}$. Observe that the supports of $b,b_1$
are disjoint. We continue recurrently packing up the cubes $Q^\#$. As we finish, we have the atom $\tilde{\tilde {\tilde a}}$ (multiple tildas) satisfying conditions of Definition \ref{Def} (verification of that is immediate once we observe that $a$ has been modified only on cubes $Q_s$ for which we use Definition \ref{Def} (d)), and a sequence of correction atoms $b_j$'s of disjoint supports. We thus have
$\sum_j \|b_j\|_{L^1}\le  2\|a_{Q_{i_1,\dots,i_s}} \|_{L^1}$. The theorem follows.
\end{proof}
{\bf Example.}
Let $\varphi$ be a radial kernel, with support in ball of radius $1+\frac1{d}$, with its radial profile constant on ball of radius 1, linear for $1\le |x|\le 1+\frac1{d}$, such that $\int \varphi=1$. Let $\varphi_t$ be the $L^1$ normalized dilation,
$\varphi_t(x)=\frac1{t^d}\varphi(\frac{x}{t})$. We will prove the following
\begin{theorem}\label{theorem}
 For an atom $a$ satisfying axioms of Definition \ref{Def}, supported on the cube $[0,1]^d$, we have
\begin{equation}\label{MM_est}
\|\sup_{t\le\frac1{d}}\varphi_t*a\|_{L^1}\le Cd\log(d)\|a\|_{L^1}.
\end{equation}
with $C$ an absolute constant. As a consequence, the operator norm of the the maximal function
\begin{equation}\label{ME_est}
Mf(x)=|\sup_{t>0}\varphi_t*f(x)|
\end{equation}
acting from $H^1_d\rightarrow L^1$, is at most $Cd\log(d)$.
\end{theorem}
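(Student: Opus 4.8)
The plan is to prove \eqref{MM_est} directly, and then obtain \eqref{ME_est} from it together with Theorem~\ref{theorem1} and soft bounds for the large dilations.

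For \eqref{MM_est} the first observation is a localisation: since ${\rm supp}\,a\subset[0,1]^d$ and ${\rm supp}\,\varphi_t\subset B(0,t(1+\tfrac1d))$, for $t\le\tfrac1d$ the function $\sup_{t\le1/d}\varphi_t*a$ is supported in the $\tfrac2d$-neighbourhood of $[0,1]^d$, whose measure is at most $(1+\tfrac4d)^d\le e^4$; so the point is to estimate the \emph{size} of $\sup_{t\le1/d}\varphi_t*a$, not the size of its support. I would then split $a$ at the level $\tfrac1{|Q|}$ of Definition~\ref{Def}(d): outside $\bigcup_sQ_s$ one has $|a|\le\tfrac1{|Q|}$, and absorbing into that ``bounded part'' also the cubes $Q_s$ on which $|a|$ is merely comparable to $\tfrac1{|Q|}$, one is left with $h=\sum_sc_s\mathbb 1_{Q_s}$ over the cubes with $|c_s|\gg\tfrac1{|Q|}$, for which $\sum_s|Q_s|\lesssim\|a\|_{L^1}$ (here $|Q|=1$). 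The bounded part contributes $O(1)\|a\|_{L^1}$ via $\|\varphi_t\|_{L^1}=1$ and the measure bound. For $h$, cubes $Q_s$ of side $\gtrsim\tfrac1d$ can be treated one at a time: $\varphi_t*\mathbb 1_{Q_s}$ with $t\le\tfrac1d$ lives within distance $\tfrac2d$ of $Q_s$ and is appreciable only on a collar of $\partial Q_s$, so $\int\sup_{t\le1/d}|\varphi_t*\mathbb 1_{Q_s}|\lesssim d\,|Q_s|$ and, summing, $\lesssim d\,\|a\|_{L^1}$. For cubes $Q_s$ much smaller than $\tfrac1d$ one cannot afford to argue cube by cube, and here Definition~\ref{Def}(d) enters essentially: following the proof of the Lemma (and of Theorem~\ref{theorem1}), replace $a$ on $\bigcup_sQ_s$ by the parent averages $\langle a\rangle_{Q_s^\#}$, which are $\le C/|Q|$; this leaves a function bounded by $C/|Q|$ on $[0,1]^d$ (again absorbed by the measure bound) together with correction functions supported on the $Q_s^\#$ and constant on their children, whose $\varphi_t$-maximal functions, analysed according as $t$ is smaller or larger than the side of $Q_s^\#$, are again concentrated in collars of $\partial Q_s$ and, after summing over the $O(\log d)$ dyadic scales between the side of $Q_s$ and $\tfrac1d$ and over all $s$, contribute $O(d\log d)\|a\|_{L^1}$. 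Assembling the three contributions gives \eqref{MM_est}.

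To pass to \eqref{ME_est}, write $f=\sum_i\lambda_ia_{Q_i}$ by Theorem~\ref{theorem1} with $\sum_i|\lambda_i|\le\|f\|_{\har}$; by sublinearity $\|Mf\|_{L^1}\le\sum_i|\lambda_i|\,\|\sup_{t>0}|\varphi_t*a_{Q_i}|\|_{L^1}$, and the dilation identity $\varphi_t*a_Q(x)=2^{md}(\varphi_{2^mt}*b)(2^m(x-x_Q))$, with $2^{-m}$ the side of $Q$ and $b$ the rescaled atom on $[0,1]^d$, reduces matters to $\|\sup_{t>0}|\varphi_t*b|\|_{L^1}\lesssim d\log d\,\|b\|_{L^1}$ for atoms $b$ on $[0,1]^d$. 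Split $t$ into $t\le\tfrac1d$, $\tfrac1d<t<d^{3/2}$, and $t\ge d^{3/2}$. The first is exactly \eqref{MM_est}. The middle range is $O(\log d)$ dyadic blocks $[T,2T]$, and on each one the fundamental theorem of calculus in $t$ gives
\[
\sup_{T\le t\le2T}|\varphi_t*b|\le|\varphi_T*b|+\int_T^{2T}\bigl|(\partial_t\varphi_t)*b\bigr|\,dt,
\]
so $\bigl\|\sup_{T\le t\le2T}|\varphi_t*b|\bigr\|_{L^1}\le\|b\|_{L^1}+\Bigl(\int_T^{2T}\|\partial_t\varphi_t\|_{L^1}\,dt\Bigr)\|b\|_{L^1}\lesssim d\,\|b\|_{L^1}$, since a direct computation from the profile of $\varphi$ gives $\|\partial_t\varphi_t\|_{L^1}\le Cd/t$ (the $d$ coming from the radial width $\tfrac1d$ of the shell on which $\nabla\varphi$ is supported); summing over the blocks costs the extra $\log d$. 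For $t\ge d^{3/2}$ one uses $\int b=0$: with $x_0$ the centre of $[0,1]^d$, $\varphi_t*b(x)=\int(\varphi_t(x-y)-\varphi_t(x-x_0))b(y)\,dy$, so $|\varphi_t*b(x)|\lesssim\sqrt d\,\|\nabla\varphi_t\|_{L^\infty}\|b\|_{L^1}\lesssim\frac{d^{3/2}}{t}\cdot\frac{\|b\|_{L^1}}{v_dt^d}$, while for such $t$ the support of $\varphi_t*b$ lies in $B\bigl(x_0,t(1+\tfrac Cd)\bigr)$, of measure $\lesssim v_dt^d$; this yields $\|\varphi_t*b\|_{L^1}\lesssim\frac{d^{3/2}}{t}\|b\|_{L^1}$, and the dyadic sum over $t\ge d^{3/2}$ is $O(\|b\|_{L^1})$. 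Adding the three ranges gives \eqref{ME_est}.

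The genuinely hard point is \eqref{MM_est}, and within it the treatment of the cubes $Q_s$ of side $\ll\tfrac1d$: the $L^1$ and $L^\infty$ size information of Definition~\ref{Def}(b),(c) is by itself insufficient (cf.\ the remarks after the Lemma), and one must use the parent-average bound of Definition~\ref{Def}(d) to reduce, at fine scales, to a bounded function. The main obstacle is thus the combinatorial accounting, namely showing that the boundary-collar contributions of all the cubes $Q_s$, summed over the $O(\log d)$ relevant dilation scales, add up to $O(d\log d)\|a\|_{L^1}$; this is the step that forces the factor $d$ (collars) and the factor $\log d$ (range of scales), the rest of the argument being soft.
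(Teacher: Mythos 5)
Your architecture is sensible and you correctly locate the difficulty in the black cubes $Q_s$ and in the use of Definition \ref{Def}(d) at fine scales; the reduction of \eqref{ME_est} to \eqref{MM_est} via Theorem \ref{theorem1}, rescaling, and the three ranges of $t$ (with the $\|\partial_t\varphi_t\|_{L^1}\le Cd/t$ bound in the middle range and cancellation for $t\ge d^{3/2}$) matches the paper's sketch. But the two quantitative steps inside \eqref{MM_est} that are supposed to produce the factor $d\log d$ do not hold as argued. First, for a black cube of side $\ell\gtrsim\frac1d$ you claim $\int\sup_{t\le 1/d}|\varphi_t*\mathbf{1}_{Q_s}|\lesssim d\,|Q_s|$ from a ``collar'' consideration. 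The support of $\sup_{t\le1/d}\varphi_t*\mathbf{1}_{Q_s}$ is essentially the $\frac2d$-neighbourhood of $Q_s$, of measure $(\ell+\frac4d)^d\approx e^{4/\ell}|Q_s|$, which for $\frac1d\le\ell\ll\frac1{\log d}$ is vastly larger than $d\,|Q_s|$; so the support bound gives nothing precisely in the range you assign it to. What actually works is a trichotomy in the ratio $t/\ell$: for $t\le\ell/d$ the convolution is supported in $(1+\frac4d)Q_s$, of measure $\le e^4|Q_s|$ (dimension free); for $\ell/d\le t\le d^{3/2}\ell$ one dominates the supremum by $\varphi_{\ell/d}*\mathbf{1}_{Q_s}+\int|\partial_t\varphi_t|*\mathbf{1}_{Q_s}\,dt$ and uses $\int_{\ell/d}^{d^{3/2}\ell}\|\partial_t\varphi_t\|_{L^1}\,dt\le Cd\log d$ --- this is where the $d\log d$ in the theorem really comes from; for $t\ge d^{3/2}\ell$ the cube must be handed to the cancellative part. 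You have the derivative bound in hand but only apply it for $t\ge\frac1d$, never inside the range $t\le\frac1d$ where it is indispensable.

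Second, the fine scales. For $Q_s$ of side $\rho$ with $t\gg\rho$ the function $\varphi_t*\mathbf{1}_{Q_s}$ is spread over a ball of radius $\sim t$, not a collar of $\partial Q_s$, and the number of dyadic scales between $\rho$ and $\frac1d$ is unbounded (black cubes can be arbitrarily small), so ``summing over the $O(\log d)$ scales between $l(Q_s)$ and $\frac1d$'' cannot be the accounting; as written this part of the argument does not close. The missing idea is a quantitative gain \emph{per scale}: if $b$ has mean zero, $\|b\|_{L^\infty}\le1$, and is supported on a cube of side $\rho$ with $t\ge d^{3/2}\rho$, then comparing $\varphi_t(x-y)$ with $\varphi_t(x-y_c)$ ($y_c$ the centre) gives $|\varphi_t*b|\le C\frac{d^{3/2}\rho}{t}$ times a normalized indicator of a ball of radius $\approx t$ (Lemma \ref{prep_lem_1} and Corollary \ref{coro_infty}). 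Applied to the martingale differences $E_{-n}\a-E_{-n-1}\a$ for $2^{-n}\le 2^{-s}\simeq td^{-3/2}$, this yields a geometric series in the scale and a \emph{dimension-free} total for the non-black part, while the residual pieces supported on black children require a separate tail estimate (integration in polar coordinates) contributing $O(d)$. Your move of replacing $a$ by parent averages on $\bigcup_sQ_s$ is the correct first step, but without the $\frac{d^{3/2}\rho}{t}$ gain the corrections over infinitely many scales do not sum.
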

We also prove the lower estimate $C\frac{d}{\log(d)}$, see comments and the end of the proof.
\begin{proof}
Let us fix an atom $\a$ supported on $\mathbf Q=[0,1]^d$.
We begin with a sequence of lemmas.
\begin{lemma} {\label{prep_lem_1}}
Let $Q$ be a cube with sidelength $\rho$, $y\in Q$, $t\ge d^\frac32\rho$ and
\begin{equation*}
  \mathbbm{1}_t=\mathbbm{1}_B, \qquad\text{where }B=B\big(0,t(1+\textstyle\frac{2}{d})\big),
\end{equation*}
(ball of center 0 and radius $t(1+\frac{1}{d}))$. We then have
\begin{equation*}
  |\varphi_t(x-y_c)-\varphi_t(x-y)|\ \le\ C\cdot\frac{d}{t}\cdot\mathbbm{1}_t(x-y)\cdot\frac{1}{|B|}\cdot\|y\|,
\end{equation*}
where $y_c$ is the center of $Q$. Consequently, for an atom $a$ supported on a cube $Q$, satisfying $\|a\|_{L^\infty}\le 1$, with $Q$ and $t$ as above, we have
\begin{equation*}
|\varphi_t*a(0)|\ \le\ C\cdot\frac{d\sqrt{d}\rho}{t}\cdot\|\mathbbm{1}_t\|_{L^1(Q)}.
\end{equation*}
\end{lemma}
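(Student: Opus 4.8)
The first inequality is a quantitative Lipschitz estimate for $\varphi_t$, and the second follows from it by subtracting the mean of the atom. The plan is to first record two elementary, dimension-free facts about $\varphi$. Since $\varphi$ equals a constant $c$ on the unit ball, is nonnegative on the shell $1\le|w|\le 1+\tfrac1d$, and $\int\varphi=1$, one has $c\,\omega_d\le 1\le c\,\omega_d(1+\tfrac1d)^d\le e\,c\,\omega_d$, where $\omega_d=|B(0,1)|$; hence $c\sim 1/\omega_d$. Also, the radial profile of $\varphi$ decreases linearly from $c$ to $0$ across a shell of width $1/d$, so $|\nabla\varphi|\le cd$ on $\{1\le|w|\le 1+\tfrac1d\}$ and $\nabla\varphi=0$ off it; rescaling, $\nabla\varphi_t(z)=t^{-(d+1)}(\nabla\varphi)(z/t)$ is supported in the shell $\{t\le|z|\le t(1+\tfrac1d)\}$, where $|\nabla\varphi_t(z)|\le cd\,t^{-(d+1)}\le C\frac{d}{t}\cdot\frac1{|B|}$, using $|B|=\omega_d t^d(1+\tfrac2d)^d\sim\omega_d t^d$ (note $B$ has radius $t(1+\tfrac2d)$, the parenthetical $t(1+\tfrac1d)$ in the statement being a slip).

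For the first inequality I would apply the fundamental theorem of calculus to the Lipschitz function $\varphi_t$ along the segment joining $x-y$ to $x-y_c$, whose length is $\|y-y_c\|\le\mathrm{diam}\,Q=\sqrt d\,\rho$: thus $|\varphi_t(x-y_c)-\varphi_t(x-y)|\le\|y-y_c\|\cdot\sup|\nabla\varphi_t|$ over that segment. By the previous step this supremum vanishes unless the segment meets the shell $\{t\le|z|\le t(1+\tfrac1d)\}$; and if a point $z$ of the segment lies in that shell then $|x-y|\le|z|+\|y-y_c\|\le t(1+\tfrac1d)+\sqrt d\,\rho\le t(1+\tfrac2d)$, since $t\ge d^{3/2}\rho$ forces $\sqrt d\,\rho\le t/d$. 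Hence the supremum is nonzero only when $x-y\in B$, i.e. $\mathbbm{1}_t(x-y)=1$, and the claimed bound follows (with $\|y-y_c\|$ in place of the printed $\|y\|$, $y_c\in Q$ being the center of $Q$).

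For the consequence, let $a$ be an atom on $Q$ with $\|a\|_{L^\infty}\le1$. Since $\int a=0$ and $\mathrm{supp}\,a\subset Q$, we may subtract a constant: $\varphi_t*a(0)=\int_Q(\varphi_t(-w)-\varphi_t(-y_c))a(w)\,dw$. By the triangle inequality, $\|a\|_{L^\infty}\le1$, and the first inequality applied with $x=0$, $y=w$ (noting $\|w-y_c\|\le\sqrt d\,\rho$ and that $\mathbbm{1}_t$ is even), $|\varphi_t*a(0)|\le C\frac{d}{t}\cdot\frac{\sqrt d\,\rho}{|B|}\int_Q\mathbbm{1}_t(-w)\,dw=C\frac{d\sqrt d\,\rho}{t}\cdot\frac{|B\cap Q|}{|B|}$, which is the asserted bound once the factor $|B|^{-1}$ is absorbed into the normalization of $\mathbbm{1}_t$ (consistent with the way $\tfrac1{|B|}\mathbbm{1}_t$ enters the first inequality).

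The only point with real content is the geometric trapping in the second step: a segment of length at most $\mathrm{diam}\,Q$ that reaches the thin shell carrying $\nabla\varphi_t$ cannot leave the slightly dilated ball $B$. This is precisely where the hypothesis $t\ge d^{3/2}\rho$ is used — it makes $\mathrm{diam}\,Q=\sqrt d\,\rho$ small relative to the shell width $t/d$. Everything else is bookkeeping with $\omega_d$, $(1+\tfrac1d)^d\le e$ and $(1+\tfrac2d)^d\le e^2$, each contributing only absolute constants, so the resulting $C$ is dimension-free, as claimed.
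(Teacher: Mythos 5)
The paper states this lemma without proof, and your argument --- the gradient bound $|\nabla\varphi_t|\le C\,d\,t^{-1}|B|^{-1}$ supported on the shell $\{t\le|z|\le t(1+\frac1d)\}$, the mean-value estimate along the segment from $x-y$ to $x-y_c$, the geometric trapping of that segment inside $B(0,t(1+\frac2d))$ via $t\ge d^{3/2}\rho$, and the subtraction of $\varphi_t(-y_c)$ using $\int a=0$ --- is exactly the standard computation the authors intend, and it is correct. You are also right to flag the three slips in the statement: the parenthetical radius should be $t(1+\frac2d)$, the factor $\|y\|$ should be $\|y-y_c\|$, and the final display should carry the normalization $\frac{1}{|B|}$ (i.e.\ the bound is $C\frac{d^{3/2}\rho}{t}\cdot\frac{|B\cap Q|}{|B|}$, which is what your derivation yields and, being $\le C\frac{d^{3/2}\rho}{t}$, is precisely what is needed for Corollary \ref{coro_infty} and its later applications).
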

As a corollary to Lemma \ref{prep_lem_1} we immediately have
\begin{corollary}\label{coro_infty}
Suppose $a_1,a_2,\dots$ are atoms supported on disjoint cubes of sidelengths $\rho$, all contained in some cube $Q$.
Assume $\|a_i\|_{L^\infty}\le 1$ and $t\ge d^\frac32\rho$. Then
\begin{equation}\label{Jel_inf}
  \Big|\varphi_t*\sum_i a_i(0)\Big| \le\ C\cdot\frac{d^{3/2}\rho}{t}\cdot\|\mathbbm{1}_t\|_{L^1(Q)}.
\end{equation}
\end{corollary}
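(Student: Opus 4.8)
The plan is to obtain the corollary directly from Lemma~\ref{prep_lem_1} by summing the single-atom estimate. Each $a_i$ is an atom---in particular of mean zero, $\int a_i=0$---with $\|a_i\|_{L^\infty}\le 1$ and support contained in a cube $Q_i$ of sidelength $\rho$, and $t\ge d^{3/2}\rho$. Hence the hypotheses of Lemma~\ref{prep_lem_1} are met by each $a_i$ separately, and that lemma gives
\[
  |\varphi_t*a_i(0)|\ \le\ C\cdot\frac{d^{3/2}\rho}{t}\cdot\|\mathbbm{1}_t\|_{L^1(Q_i)} .
\]

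Next I would use linearity of the convolution and the triangle inequality. Since the cubes $Q_i$ are essentially disjoint and contained in the fixed cube $Q$ and each $a_i$ is bounded by $1$, the series $\sum_i a_i$ converges in $L^1$, so $\varphi_t*\sum_i a_i(0)=\sum_i\varphi_t*a_i(0)$, and therefore
\[
  \Big|\varphi_t*\sum_i a_i(0)\Big|\ \le\ \sum_i|\varphi_t*a_i(0)|\ \le\ C\cdot\frac{d^{3/2}\rho}{t}\sum_i\|\mathbbm{1}_t\|_{L^1(Q_i)} .
\]
It then remains only to collapse the sum on the right. Here the key point is that $\mathbbm{1}_t=\mathbbm{1}_B$ is one fixed nonnegative function---the indicator of the ball $B$ centered at the origin, whose radius depends only on $t$ and $d$, not on $i$---so the localized $L^1$-masses $\|\mathbbm{1}_t\|_{L^1(Q_i)}$ are additive over the essentially disjoint family $\{Q_i\}$ and, since $\bigcup_i Q_i\subset Q$, sum to at most $\|\mathbbm{1}_t\|_{L^1(Q)}$. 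Substituting this into the previous display gives \eqref{Jel_inf}.

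I do not expect any real obstacle: the argument amounts to the triangle inequality together with additivity and monotonicity of $L^1$-mass. The only point worth stressing---and the reason for isolating this as a corollary---is that it is essential that Lemma~\ref{prep_lem_1} is stated with the mass localized to each atom's \emph{own} cube, $\|\mathbbm{1}_t\|_{L^1(Q_i)}$, rather than with the full mass $\|\mathbbm{1}_t\|_{L^1}$ or a crude $\|\varphi_t\|_{L^\infty}$-type bound. Precisely this localization makes the individual estimates summable over the disjoint family, so that the final bound remains linear in $\|\mathbbm{1}_t\|_{L^1(Q)}$ with no factor proportional to the number of atoms.
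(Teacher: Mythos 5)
Your proof is correct and is exactly the argument the paper has in mind when it calls the corollary immediate: apply the single-atom estimate of Lemma~\ref{prep_lem_1} to each $a_i$ on its own cube $Q_i$, sum by the triangle inequality, and use that $\mathbbm{1}_t$ is a fixed nonnegative function so the localized masses $\|\mathbbm{1}_t\|_{L^1(Q_i)}$ over the disjoint cubes add up to at most $\|\mathbbm{1}_t\|_{L^1(Q)}$. You also correctly identify the localization in the lemma's conclusion as the point that makes the summation work.
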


We recall that according to Definition \ref{Def} (d) for each atom $a$ we have distinguished cubes $Q_s$ such that $a$ is constant (with value no greater than $2^d/|Q_s|$) on each of the $Q_s$'s. We will call these distinguished cubes black. For a black cube $Q$ the value of $a$ on $Q$ will be denoted $\alpha_Q$.

\begin{lemma}\label{at_decomp_1}
 Let us fix an integer $s$ with $2^{-s}\simeq t d^{-\frac32}$. Let $\mathcal M$ be the family of the maximal $Q^\#$ with sidelength $\le 2^{-s}$ (cubes $Q^\#$ are the parent cubes of black cubes given by Definition \ref{Def} (d) for the fixed atom $\a$). The atom $\a$, decomposes as a sum
\begin{equation*}
  \a=a_1^s+a_2^s+a_3^s,
\end{equation*}
where
\begin{equation*}
  a_1^s(x)=\begin{cases}\hfil 0&:\text{ on any black cube with sidelength $\ge2^{-s}$},\\\frac{\mathbbm{1}_{Q^\#}(x)}{|Q^\#|}\int_{Q^\#}\a&:\ x\in Q^\#,Q^\#\in\mathcal M,\\
  \hfil a(x)&:\text{ otherwise},\end{cases}
\end{equation*}
\begin{equation*}
  a_2^s(x)=\sum_i \mathbbm{1}_{Q_i}(x) \a(x),
\end{equation*}
where $Q_i$'s are all the black cubes with sidelengths $\ge 2^{-s}$, and
\begin{equation*}
  a_3^s(x) =\begin{cases} \a(x)-\frac{\mathbbm{1}_{Q^\#}(x)}{|Q^\#|}\int_{Q^\#}\a&:\ x\in Q^\#,Q^\#\in\mathcal M,\\
  \hfil 0&:\text{ otherwise}.\end{cases}
\end{equation*}
Clearly we have
\begin{equation*}
  |\varphi_t*a_1^s|\ \le\ C,
\end{equation*}
and, moreover, for $t\le\frac{1}{d}$
\begin{equation*}
\text{\rm{supp}}\,(\varphi_t*a_1^s)\subset(1+\textstyle\frac{4}{d})\mathbf{Q}.
\end{equation*}
\end{lemma}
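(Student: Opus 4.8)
\textbf{Proof plan for Lemma \ref{at_decomp_1}.}

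The plan is to verify the three stated properties of the decomposition in turn, treating the decomposition itself as an essentially tautological splitting of $\a$ according to the scale of the black cubes. First I would check that $\a = a_1^s + a_2^s + a_3^s$ as functions. The only points where this is not immediate are the points lying in some $Q^\# \in \mathcal M$: there $a_1^s$ contributes the average $\frac{1}{|Q^\#|}\int_{Q^\#}\a$ and $a_3^s$ contributes $\a - \frac{1}{|Q^\#|}\int_{Q^\#}\a$, so they sum to $\a$, while $a_2^s$ vanishes there (a black cube of sidelength $\ge 2^{-s}$ cannot lie inside a $Q^\#\in\mathcal M$ since such $Q^\#$ has sidelength $\le 2^{-s}$ and is a \emph{parent} of a black cube, which then has sidelength $\le 2^{-s-1}<2^{-s}$). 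On a black cube of sidelength $\ge 2^{-s}$, $a_1^s=0$, $a_3^s=0$, and $a_2^s=\a$. Everywhere else $a_1^s=\a$ and $a_2^s=a_3^s=0$. One should also note the cubes in $\mathcal M$ are pairwise disjoint by maximality, so $a_1^s$ and $a_3^s$ are well defined.

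Next I would estimate $\varphi_t * a_1^s$ pointwise. The key observation is that $a_1^s$ is, by construction, bounded by $C/|Q^\#|$ on each $Q^\#\in\mathcal M$ — this is exactly Definition \ref{Def}(d), the condition \eqref{qsets}, giving $\frac{1}{|Q^\#|}|\int_{Q^\#}\a|\le C/|\mathbf Q| = C$ — and bounded by $1/|\mathbf Q| = 1$ off the black cubes by Definition \ref{Def}(d) (on the complement of the black cubes $|\a|\le 1/|\mathbf Q|$). Hence $\|a_1^s\|_{L^\infty}\le C$. Since $\varphi_t$ is $L^1$-normalized, $\|\varphi_t*a_1^s\|_{L^\infty}\le \|a_1^s\|_{L^\infty}\|\varphi_t\|_{L^1}\le C$, which is the first claimed bound. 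The support statement is then a matter of the support of the convolution being contained in $\text{supp}\,a_1^s + \text{supp}\,\varphi_t \subset \mathbf Q + B(0, t(1+\tfrac1d))$; since $\mathbf Q=[0,1]^d$ and $t\le \frac1d$, the ball $B(0,t(1+\tfrac1d))$ has radius $\le \frac1d(1+\tfrac1d)\le \frac2d$, and a set of diameter $\le \frac2d\cdot 2 = \frac4d$ added to $[0,1]^d$ is contained in $(1+\tfrac4d)\mathbf Q$ (after recentering appropriately; one may need a harmless constant adjustment, e.g. $(1+\tfrac{C}{d})\mathbf Q$, but the paper's normalization makes $4/d$ work).

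The main point requiring care — and the one I expect to be the only genuine obstacle — is making sure the bound $\|a_1^s\|_{L^\infty}\le C$ is legitimate, i.e. that \emph{every} point of $\er^d$ outside the black cubes of sidelength $\ge 2^{-s}$ either lies in some $Q^\#\in\mathcal M$ (where the average bound applies) or lies off all black cubes (where $|\a|\le 1/|\mathbf Q|$ applies). A point could in principle lie inside a black cube of sidelength $<2^{-s}$ whose parent has sidelength $\le 2^{-s}$ — but then that parent is contained in a maximal such parent, which is precisely an element of $\mathcal M$, so the point is in some $Q^\#$ and the average bound governs it; the subtlety is only that one must invoke maximality of the family $\mathcal M$ to cover these points. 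With that observed, the three displayed conclusions follow, and the decomposition is set up for the subsequent argument where Corollary \ref{coro_infty} will be applied to $a_2^s$ and the pieces of $a_3^s$.
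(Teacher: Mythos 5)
Your verification is correct and is essentially the argument the paper leaves implicit (the lemma is asserted with ``clearly'' and no proof is given): one checks the three cases are exhaustive and essentially disjoint, gets $\|a_1^s\|_{L^\infty}\le C$ from Definition \ref{Def}(d) applied both to the parent averages over $Q^\#\in\mathcal M$ and to the complement of the black cubes, and reads off the support from $\mathrm{supp}\,\varphi_t\subset B(0,t(1+\frac1d))$. Your hedge about the constant is unnecessary — enlarging $[0,1]^d$ by $2/d$ on each side is exactly $(1+\frac4d)\mathbf{Q}$ — and the one point worth stating more cleanly is that a black cube of sidelength $\ge 2^{-s}$ cannot \emph{contain} (rather than lie inside) a $Q^\#\in\mathcal M$, since it would then contain the black child of $Q^\#$, contradicting the essential disjointness of the black cubes.
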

(For a cube $Q$ and a positive number $s$ $sQ$ means cube with the same center as $Q$, and sidelength $s$ times the sidelength of $Q$.)
\begin{corollary}
We have
\begin{align*}
  |\varphi_t*\a(x)|&\le C\cdot\mathbbm{1}_{(1+\frac{4}{d})\mathbf{Q}}(x)+
  \sum_{\twoline{\text{$Q_i$ black,}}{l(Q_i)\ge d^\frac52 2^{-s}}}
  |\alpha_{Q_i}|\mathbbm{1}_{(1+4/d)Q_i}(x)+\\
  &\quad+\sum_{\twoline{\text{$Q_i$ black,}}{2^{-s}\le l(Q_i)<d^\frac52 2^{-s}}}
  |\alpha_{Q_i}|\cdot\varphi_t*1_{Q_i}(x)+|\varphi_t*a_3^s(x)|,\\
  &=I+II+III+IV
\end{align*}
where we denote by $l(Q)$ the sidelength of $Q$.
\end{corollary}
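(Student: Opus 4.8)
The plan is to read the bound off the splitting $\a=a_1^s+a_2^s+a_3^s$ provided by Lemma~\ref{at_decomp_1}. Convolving with $\varphi_t$ and using the triangle inequality gives $|\varphi_t*\a(x)|\le|\varphi_t*a_1^s(x)|+|\varphi_t*a_2^s(x)|+|\varphi_t*a_3^s(x)|$. The last term is kept as $IV$; the first will be bounded by $I$; and the middle one will be split into $II$ and $III$ according to the sidelengths of the black cubes.

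For $a_1^s$ I would simply invoke Lemma~\ref{at_decomp_1}, which already yields $|\varphi_t*a_1^s|\le C$ and, since $t\le\tfrac1d$, ${\rm supp}\,(\varphi_t*a_1^s)\subset(1+\tfrac4d)\mathbf Q$; the two together give $|\varphi_t*a_1^s(x)|\le C\,\mathbbm{1}_{(1+4/d)\mathbf Q}(x)$. For $a_2^s$ I would write $a_2^s=\sum_i\alpha_{Q_i}\mathbbm{1}_{Q_i}$, the sum over the (essentially disjoint) black cubes $Q_i$ with $l(Q_i)\ge 2^{-s}$, using that by Definition~\ref{Def}(d) the atom $\a$ equals the constant $\alpha_{Q_i}$ on $Q_i$. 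Since $\varphi\ge0$ and $\int\varphi_t=1$ we have $0\le\varphi_t*\mathbbm{1}_{Q_i}\le 1$ pointwise, hence $|\varphi_t*a_2^s(x)|\le\sum_i|\alpha_{Q_i}|\,\varphi_t*\mathbbm{1}_{Q_i}(x)$, and I would split this sum in two. For a large black cube, $l(Q_i)\ge d^{5/2}2^{-s}$, the function $\varphi_t*\mathbbm{1}_{Q_i}$ is supported within distance $t(1+\tfrac1d)$ of $Q_i$, and since $2^{-s}\simeq td^{-3/2}$ this distance is at most a fixed multiple of $\tfrac1d\,l(Q_i)$; consequently $\varphi_t*\mathbbm{1}_{Q_i}\le\mathbbm{1}_{(1+4/d)Q_i}$ and these cubes contribute $II$. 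The black cubes with $2^{-s}\le l(Q_i)<d^{5/2}2^{-s}$ are left untouched and contribute $III$. Adding the three bounds gives the asserted inequality.

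I expect the only point that is not pure bookkeeping to be the geometric inclusion ${\rm supp}\,(\varphi_t*\mathbbm{1}_{Q_i})\subset(1+\tfrac4d)Q_i$ for the large black cubes, i.e.\ the inequality $t(1+\tfrac1d)\le\tfrac2d\,l(Q_i)$; this is forced by $l(Q_i)\ge d^{5/2}2^{-s}$ together with $2^{-s}\simeq td^{-3/2}$, since the exponent $\tfrac52$ beats the $\tfrac32$ in the size of $2^{-s}$ by exactly the factor of $d$ that is needed (so $l(Q_i)\gtrsim d\,t$). Everything else — the triangle inequality, the bound $\varphi_t*\mathbbm{1}_{Q_i}\le1$, and reassembling the four pieces — is routine.
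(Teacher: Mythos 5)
Your proof is correct and follows exactly the route the paper intends: the corollary is stated as an immediate consequence of the decomposition $\a=a_1^s+a_2^s+a_3^s$ from Lemma \ref{at_decomp_1}, with the triangle inequality, the bounds $0\le\varphi_t*\mathbbm{1}_{Q_i}\le1$, and the support observation $t(1+\frac1d)\lesssim\frac{1}{d}\,l(Q_i)$ for the black cubes with $l(Q_i)\ge d^{5/2}2^{-s}$ (which, as you note, is forced by $2^{-s}\simeq td^{-3/2}$). The only caveat is bookkeeping of the absolute constants hidden in $\simeq$, which may enlarge $(1+\frac4d)Q_i$ by a harmless fixed factor; this matches the paper's own level of precision.
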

First two summands give rise to the $L^1$ control of the maximal function with constants independent of the dimension.
For the third summand we have uniform in $t$ estimate
\begin{equation*}
 III\le \sum_{\text{$Q_i$ black,}}
  |\alpha_{Q_i}|\sup_{{d^{-1} l(Q_i)\le t\le l(Q_i)d^\frac32}}\varphi_t*\mathbbm{1}_{Q_i}(x)
\end{equation*}
and since
\begin{equation*}
\int_{{d^{-1} l(Q_i)\le t\le l(Q_i)d^\frac32}}\|\partial_t\varphi_t\|_{L^1}\le Cd\log(d)
\end{equation*}
and
\begin{align}\label{J1_est}
J&=\sup_{d^{-1} l(Q_i)\le t\le l(Q_i)d^\frac32}\varphi_t*\mathbbm{1}_{Q_i}(x)\notag\\
&\le\int_{{d^{-1} l(Q_i)\le t\le l(Q_i)d^\frac32}}|\partial_t\varphi_t|*\mathbbm{1}_{Q_i}(x)dt
\end{align}
we have $\|III\|_{L^1}\le Cd\log(d)\|a\|_{L^1}$.

The last summand IV will be estimated by Corollary \ref{coro_infty}.

\begin{lemma} We have
\begin{align*}
  a_3^s(x) &=\sum_{n\ge s}\big(E_{-n} \a(x)-E_{-n-1}\a(x)\big)\\
   &=\sum_{n\ge s}\sum_{Q\in\mathcal D_n}\Big(\frac{\mathbbm{1}_Q}{|Q|}\int_Q\a-\sum_{\twoline{Q'\subset Q}{Q'\in\mathcal C(Q)}}\frac{\mathbbm{1}_{Q'}}{|Q'|}\int_{Q'}\a\Big)\\
   &=\sum_{n\ge s}\sum_{\twoline{Q\in\mathcal D_n}{\text{all }Q'\in\mathcal C(Q),\text{ not black}}} \ +\ \text{remainder}\\
   &=I\ +\ II,
\end{align*}
where $\mathcal D_n$ denotes the family of dyadic cubes of sidelengths $2^{-n}$, and $\mathcal C(Q)$ denotes the family of immediate dyadic descendants of a cube $Q$.
\end{lemma}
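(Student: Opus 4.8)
The plan is to read the three equalities as, in turn, a telescoping identity, the definition of conditional expectation, and a harmless regrouping, so that the whole lemma is essentially bookkeeping built on the structure already set up in Lemma~\ref{at_decomp_1}.

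\emph{The first equality.} Recall from Lemma~\ref{at_decomp_1} that $\a=a_1^s+a_2^s+a_3^s$, that $a_3^s$ is supported on $\bigcup_{Q^\#\in\mathcal M}Q^\#$ and equals $\a-\langle\a\rangle_{Q^\#}$ on each $Q^\#\in\mathcal M$ (so it has mean zero on each such $Q^\#$), and that $a_1^s+a_2^s$ is the ``coarse'' part of $\a$ at scale $2^{-s}$. I would first verify that $a_1^s+a_2^s$ is precisely $E_{-s}\a$: on a black cube of sidelength $\ge2^{-s}$ the atom is constant by Definition~\ref{Def}(d), hence equals its average over any dyadic subcube of sidelength $2^{-s}$, which is what $a_2^s$ records; and on the complement of those cubes and of $\bigcup_{Q^\#\in\mathcal M}Q^\#$ the choice of $\mathcal M$ leaves no black cube, so the dyadic averages of $\a$ are already stabilized at scale $2^{-s}$ and reproduce $a_1^s$. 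Since $\|\a\|_{L^1}\le 1$, the martingale convergence theorem gives $E_{-n}\a\to\a$ in $L^1$ as $n\to\infty$; hence the series $\sum_{n\ge s}(E_{-n}\a-E_{-n-1}\a)$ converges in $L^1$ and telescopes to the difference between $\a$ and $E_{-s}\a$, which by the identification just made is exactly $a_3^s$.

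\emph{The second equality} is immediate from the definition of the conditional expectations: $E_{-n}\a=\sum_{Q\in\mathcal D_n}\frac{\mathbbm 1_Q}{|Q|}\int_Q\a$, and since every cube of $\mathcal D_{n+1}$ is an immediate descendant of a unique cube of $\mathcal D_n$, also $E_{-n-1}\a=\sum_{Q\in\mathcal D_n}\sum_{Q'\in\mathcal C(Q)}\frac{\mathbbm 1_{Q'}}{|Q'|}\int_{Q'}\a$; subtracting inside each $\mathcal D_n$ (legitimate because the partial sums converge in $L^1$) gives the middle expression. \emph{For the third equality}, within each $\mathcal D_n$ I would split the cubes $Q$ into those all of whose $2^d$ immediate descendants are non-black, whose contribution is the term $I$, and the remaining ones, the cubes $Q$ having at least one black child, whose contribution is lumped into the remainder $II$. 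This is only a regrouping of the absolutely convergent double sum, hence harmless; the point of isolating precisely the parents of black cubes is that on those cubes one will later invoke Definition~\ref{Def}(d), in particular $\frac{1}{|Q_s^\#|}\big|\int_{Q_s^\#}\a\big|\le C/|Q|$, so that $II$ is controlled through the black-cube structure while $I$ is a clean ``no-black-cube'' martingale sum.

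The main obstacle is the one substantive point hidden in the first equality: that off $\bigcup_{Q^\#\in\mathcal M}Q^\#$ and the large black cubes the atom already agrees with its conditional expectation at scale $2^{-s}$, so that all the fine-scale martingale increments of $\a$ are concentrated on $\bigcup_{Q^\#\in\mathcal M}Q^\#$ and there coincide with those of $a_3^s$. This rests entirely on the maximality in the definition of $\mathcal M$: a black cube of sidelength $<2^{-s}$ has a dyadic parent of sidelength $\le2^{-s}$, which is itself one of the cubes $Q^\#$ and therefore lies in, or equals, a member of $\mathcal M$; hence no such black cube survives outside $\bigcup_{Q^\#\in\mathcal M}Q^\#$. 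Everything else is formal.
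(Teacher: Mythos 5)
The paper gives no proof of this lemma at all, so there is nothing to compare your argument with; judged on its own merits, your argument breaks down exactly at the point you yourself single out as the only substantive one, namely the identity $a_1^s+a_2^s=E_{-s}\a$. Your justification on the ``otherwise'' region --- that maximality of $\mathcal M$ leaves no black cube outside $\bigcup_{Q^\#\in\mathcal M}Q^\#$ and the large black cubes, ``so the dyadic averages of $\a$ are already stabilized at scale $2^{-s}$'' --- is a non sequitur. Definition \ref{Def}(d) controls only the \emph{size} of $\a$ off the black cubes ($|\a|\le 1/|\mathbf Q|$ there), not its oscillation: $\a$ need not be constant on dyadic cubes of sidelength $2^{-s}$ in that region, so the increments $E_{-n}\a-E_{-n-1}\a$, $n\ge s$, are in general nonzero there, whereas $a_3^s$ vanishes there. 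A concrete counterexample is an atom with $|\a|\le1$ everywhere and an empty family of black cubes (then $\mathcal M=\emptyset$ and $a_3^s\equiv0$) which oscillates at scales finer than $2^{-s}$. Two further mismatches you pass over: the series as written telescopes to $E_{-s}\a-\a$, while on a cube $Q^\#\in\mathcal M$ of sidelength $2^{-s}$ one has $a_3^s=\a-\langle\a\rangle_{Q^\#}$, i.e.\ the opposite sign; and for $Q^\#\in\mathcal M$ of sidelength strictly smaller than $2^{-s}$ the value of $E_{-s}\a$ on $Q^\#$ is the average of $\a$ over the scale-$2^{-s}$ dyadic ancestor of $Q^\#$, not $\langle\a\rangle_{Q^\#}$, so the identification fails on $\bigcup\mathcal M$ as well.

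What does survive of your plan is the purely formal part: the exact telescoping identity $\a-E_{-s}\a=\sum_{n\ge s}\bigl(E_{-n-1}\a-E_{-n}\a\bigr)$ in $L^1$, the rewriting of each increment over $\mathcal D_n$ and $\mathcal C(Q)$, and the regrouping of cubes according to whether they have a black child. The genuine content you would still need to supply is the passage from $\a-E_{-s}\a$ to $a_3^s$: either the martingale differences must be restricted to $\bigcup_{Q^\#\in\mathcal M}Q^\#$ (with the resulting boundary terms at the tops of the cubes $Q^\#$ accounted for), or $a_3^s$ must be replaced by $\a-E_{-s}\a$ minus its restriction to the large black cubes, with the extra piece supported off the black cubes estimated separately (it is bounded there, so Corollary \ref{coro_infty} is applicable to it). As it stands, the first displayed equality is not an identity for a general atom of Definition \ref{Def}, and your proof cannot close this gap because the statement itself needs to be amended before it can be proved.
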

Observe, that for a fixed $n\ge s$, $Q\in \mathcal D_n$, $Q$ of type contained in $I$, the $a_Q=a_3^s\cdot\mathbbm{1}_Q$
\begin{equation*}
a_Q(x)=\frac{\mathbbm{1}_Q(x)}{|Q|}\int_Q\a-\sum_{\twoline{Q'\subset Q}{Q'\in\mathcal C(Q)}}\frac{\mathbbm{1}_{Q'}(x)}{|Q'|}\int_{Q'}\a
\end{equation*}
satisfy the assumptions of Corollary \ref{coro_infty} with $\rho= 2^{-n}=2^{-s}2^{-l}$, $2^{-s}d^{\frac32}\simeq t$.
Hence, by \eqref{Jel_inf} we obtain $|\sum_{Q\in \mathcal D_n}\varphi_t*a_Q|\le C2^{-l}$, and we can sum up with respect to $l$.
As a result, we  obtain a dimension free $L^\infty$ bound.
Let $Q\in \mathcal D_n,\ n\ge s$ be such that at least one $Q'\in\mathcal C(Q)$ is black. We will then say that $Q$ has type 2. We decompose $a_Q$ further into average $0$ functions
\begin{equation*}
a_Q(x)=b_Q(x)+e_Q(x)
\end{equation*}
where
\begin{equation*}
b_Q(x)=\frac{\mathbbm{1}_Q(x)}{|Q|}\int_{\cup Q':Q'\text{ black}}a-\sum_{Q'\in\mathcal C(Q)\text{ black}}\frac{\mathbbm{1}_{Q'}(x)}{|Q'|}\int_{Q'}a
\end{equation*}
and
\begin{equation*}
e_Q(x)=\frac{\mathbbm{1}_Q(x)}{|Q|}\int_{\cup Q':Q'\text{ not black}}a-\sum_{Q'\in\mathcal C(Q)\text{ not black}}\frac{\mathbbm{1}_{Q'}(x)}{|Q'|}\int_{Q'}a
\end{equation*}
Observe, that the family $e_Q(x)$ satisfies again the condition of the Corollary \ref{coro_infty}, so  by the preceding case
argument we get
\begin{equation*}
\Big|\sum_{Q\in \mathcal D_n,Q\text{ of type 2 }}\varphi_t*e_Q\Big|\le C\cdot2^{-l}
\end{equation*}
and we again sum up to obtain a dimension free $L^\infty$ bound.

We are left with the estimate for
\begin{equation*}
J=\sum_{n\ge s}\Big|\sum_{Q\in \mathcal D_n,Q\text{ of type 2 }}\varphi_t*b_Q(x)\Big|
\end{equation*}
where we have an additional relation $2^{-s}d^{\frac32}\simeq t$. We have
\begin{equation*}
J\le \sum_{Q\text{ of type 2 }}|\sup_{t\ge d^{\frac32}l(Q)}\varphi_t*b_Q(x)|
\end{equation*}
and the right hand side does not depend on $t$. Observe, that by standard cancellation argument
\begin{equation}\label{J2_est}
\Big\|\sup_{t\ge d^{\frac32}l(Q)}\varphi_t*b_Q\Big\|_{L^1}\le Cdiam(Q)\|b_Q\|_{L^1}
\sup_{t\ge d^{\frac32}l(Q)}\frac{d\cdot\mathbbm{1}_{B(0,t(1+2/d))}(x)}{t^{d+1}|B(0,1+2/d)|}
\end{equation}
We have, for $|x|\ge(1+1/d)d^{\frac32}d(Q)$
\begin{equation*}
\sup_{t\ge d^{\frac32}l(Q)}\frac{d\cdot\mathbbm{1}_{B(0,t(1+2/d))}(x)}{t^{d+1}|B(0,1+2/d)|}=
\frac{d(1+2/d)^d}{|B(0,1+2/d)||x|^{d+1}}
\end{equation*}
and integrating in polar coordinates, the expression \eqref{J2_est} has $L^1$ norm bounded by $Cd$.
Since the case $|x|\le(1+\frac1{d})d^{\frac32}l(Q)$ is immediate, the main estimate \eqref{MM_est} follows.

The estimates of the maximal functiom over the intervals $\frac1{d}\le t \le d^\frac32$ and $t\ge d^\frac32$ follows similarly
to \eqref{J1_est}, \eqref{J2_est}. We leave the details for the reader. Theorem \ref{theorem} follows.
\end{proof}

We now briefly sketch the argument leading to the maximal function estimates from below.
We recall  that $B,|B|$ denote the unit ball in $\er^d$ and its Lebesgue measure.

First observe, that for $A=2^{2[\log(d)]}\approx d^2$, the function
\begin{equation}
h(x)=2^{-d}\mathbf{1}_{[-1,1]^d}(x)-(2A)^{-d}\mathbf{1}_{[-A,A]^d}(x)=h_1(x)-h_2(x)
\end{equation}
defined on $\er^d$ has $\har$ norm of order $\log(d)$. This can be easily checked by the formula
\begin{equation}
h(x)=\sum_{s=0}^{2[\log(d)]-1}2^{-ds}\mathbf{1}_{[-2^s,2^s]^d}(x)-  2^{-d(s+1)}\mathbf{1}_{[-2^{s+1},2^{s+1}]^d}(x)    =
   \sum_{s=0}^{2[\log(d)]-1}h^s(x)
\end{equation}
It can be easily checked, that the expectation of each  $h^s$ over a grid of the dyadic cubes of sidelength $2^{l}$, $l\ge s+1$
vanish, and that the expectation over a dyadic cubes of sidelength $2^{l}$, $l\le s$ leaves $h^s$ unchanged.
Consequently $h^s$ has its $\har$ norm equal to $2$.

Then we consider linearized maximal operator $Th(x)=\varphi_{t(x)}*h(x)$, where we will assume
$t(x)=|x|+4\le 3d$ for  $|x|\le 2d$. Then observe that $Th_2(x)=(2A)^{-d}$ for $|x|\le Cd$ and this function restricted
to the ball of radius $2d$ is of the $L^1$ norm of order $O(1)$ (and even smaller).

Now observe, that  the $L^1$ norm $Th_1(x)$ restricted to the ring $d\le |x|\le 2d$ is at least
$cd$, where $c>0$ is dimension free. The crucial observation is that if $t(x)=|x|+4$, the ball $B(t)$
covers all points in the support of $h_1$ lying below (that is in the direction of $x$) the
hyperplane passing through $0$ and perpendicular to $x$. Since $\varphi(x)\ge
\frac{c_0}{|B|}\mathbf{1}_{B}$, where $c_0$ is a dimension free constant, as a result we have  $Th_1(x)\ge
\frac{c_0}{2|B|}(|x|+4)^{-d}$. and the statement follows by integration in polar coordinates.
\begin{remark}\label{rota} The above argument applied to the classical heat maximal function leads to the
corresponding lower and upper bounds: $\frac{Cd^\frac12}{\log(d)}$, $Cd^\frac12\log(d)$.
We note, that \lq\lq near $L^1$" approach based on Rota's theorem seems to give upper estimate equal to $Cd$.
\end{remark}
\begin{remark} The following, easy to prove, inequality is very useful in obtaining the estimates from below
\begin{equation}\label{rad_low}
Ma(x)\ge Mr(a)(x)
\end{equation}
where $M$ denotes the maximal function with respect to a radial  kernel, and $r(a)$ is the radialisation of a function $a$. Using \eqref{rad_low} one can obtain lower bound $Cd$ for an example considered in Theorem \ref{theorem}.
\end{remark}
\begin{remark}\label{rmrk} The following lemma can be used to obtain an $L^2$ version of our argument.
\begin{lemma*} Let
\begin{equation*}
T(f)=\sum_{Q^\#}\sum_{Q_c\sim Q^\#}\frac{\mathbf{1}_{Q^\#}}{|Q^\#|}\int_{Q_c}f,
\end{equation*}
where $Q_c$ are the black cubes, whose immediate dyadic parent is $Q^\#$. Then
\begin{equation*}
  \int_{Q}|T(f)|^2\le A\int_Q|f_c|,
\end{equation*}
where $f_c$ is the restriction of $f$ to the black cubes, and $A$ is a universal constant.
\end{lemma*}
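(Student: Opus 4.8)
The plan is to obtain the $L^1$ and $L^\infty$ endpoint bounds for $T$ separately and then conclude by the trivial interpolation $\int_Q|Tf|^2\le\|Tf\|_{L^\infty}\int_Q|Tf|$. Throughout I read $f$ as an atom in the sense of Definition \ref{Def}, supported on $\mathbf{Q}=[0,1]^d$ (so that ``black cube'' and $f_c$ make sense and all normalizations are fixed); the structural facts that matter are those of Definition \ref{Def}(d). First I would record that $Tf$ is constant on each parent cube $Q^\#$ appearing in the sum, with value $c_{Q^\#}=\frac1{|Q^\#|}\sum_{Q_c\sim Q^\#}\int_{Q_c}f$, and that — restricting as usual to the maximal such $Q^\#$, exactly as in the proof of Theorem \ref{theorem1} — the cubes $Q^\#$ form an essentially disjoint family, so that $Tf=\sum_{Q^\#}c_{Q^\#}\mathbf{1}_{Q^\#}$.

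The $L^1$ bound is then immediate: by the triangle inequality and the fact that every black cube has a unique immediate dyadic parent,
\begin{equation*}
\int_Q|Tf|\le\sum_{Q^\#}|Q^\#|\,|c_{Q^\#}|\le\sum_{Q^\#}\sum_{Q_c\sim Q^\#}\int_{Q_c}|f|=\int_Q|f_c|.
\end{equation*}

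For the $L^\infty$ bound — the real content — I would exploit the dimension-free cancellation in Definition \ref{Def}(d). Writing $A_{Q^\#}=\bigcup_{Q_c\sim Q^\#}Q_c\subset Q^\#$ for the union of the black children of $Q^\#$ (so $c_{Q^\#}=\frac1{|Q^\#|}\int_{A_{Q^\#}}f$), split $\int_{A_{Q^\#}}f=\int_{Q^\#}f-\int_{Q^\#\setminus A_{Q^\#}}f$. The first term obeys $\frac1{|Q^\#|}\bigl|\int_{Q^\#}f\bigr|\le C$ by the cancellation condition of Definition \ref{Def}(d) (recall $|\mathbf{Q}|=1$). On $Q^\#\setminus A_{Q^\#}$ one is off the black cubes — here I must use that, for the atoms produced by the decomposition, the black cubes sitting inside $Q^\#$ are precisely its black children — so $|f|\le 1$ there by Definition \ref{Def}(d), and the second term is $\le|Q^\#|$ in absolute value. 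Hence $|c_{Q^\#}|\le C+1=:A$ uniformly in $Q^\#$, $f$ and $d$, i.e. $\|Tf\|_{L^\infty}\le A$. Combining with the $L^1$ bound gives $\int_Q|Tf|^2\le\|Tf\|_{L^\infty}\int_Q|Tf|\le A\int_Q|f_c|$, which is the claim.

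The hard part is the $L^\infty$ estimate: it is not a formal consequence of the definition of $T$, but relies essentially on the dimension-free cancellation imposed in Definition \ref{Def}(d), and it also requires controlling the combinatorics of the family $\{Q^\#\}$ — one must check that passing to the maximal parent cubes loses no average (and does not create long nested chains of $Q^\#$'s through a common point) and that on $Q^\#\setminus A_{Q^\#}$ one really is away from the black cubes, so that the bound $|f|\le 1$ there applies. Once this bookkeeping is arranged, the remainder is routine.
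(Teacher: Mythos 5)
Your $L^1$ bound is fine, but the reduction on which the rest of the argument rests --- that (after passing to maximal cubes) the parents $Q^\#$ are essentially disjoint, so that $Tf=\sum_{Q^\#}c_{Q^\#}\mathbf{1}_{Q^\#}$ with non-overlapping supports --- is not available, and with it the $L^\infty$ estimate collapses. The black cubes $Q_s$ of Definition \ref{Def}(d) are essentially disjoint, but they occur at many different scales, so their parents $Q_s^\#$ form a genuinely nested family; the operator $T$ of the Lemma sums over \emph{all} of these parents (discarding the non-maximal ones discards the black cubes they own, i.e.\ changes $T$ --- this is exactly why the paper's own proof must treat the nested pairs $Q_1^\#\subseteq Q_2^\#$). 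At a point $x$ lying in a chain $Q_1^\#\subsetneq\cdots\subsetneq Q_N^\#$ of parents one has $Tf(x)=\sum_{j=1}^{N}c_{Q_j^\#}$, and each term can be of size comparable to $1$. Concretely, in $d=1$ take the black cubes $[2^{-n-1},2^{-n}]$, $n=1,\dots,N$, with $f\equiv 1$ on them and a mean-zero correction on $[\frac12,1]$: this is an admissible atom, its parents are the nested intervals $[0,2^{-n}]$, and $Tf\approx N/2$ near the origin. So $\|Tf\|_{L^\infty}$ is \emph{not} bounded by a universal constant, and the interpolation $\int|Tf|^2\le\|Tf\|_{L^\infty}\int|Tf|$ cannot yield the Lemma. (A secondary issue: even for a single $Q^\#$, the set $Q^\#\setminus\bigcup_{Q_c\sim Q^\#}Q_c$ may still meet black cubes of smaller scale, so the pointwise bound $|f|\le1$ there is not immediate; the bound $|f|\le 1/|Q|$ of Definition \ref{Def}(d) holds only off the union of \emph{all} black cubes.)

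The inequality is genuinely an $L^2$, Carleson-type phenomenon, and that is how the paper proves it: expand $\int_Q|Tf|^2$ as a double sum over pairs $Q_1^\#,Q_2^\#$, observe that only nested pairs contribute, and for fixed $Q_2^\#$ use the telescoping identity
\begin{equation*}
\sum_{Q_1^\#\subseteq Q_2^\#}\ \int_{\bigcup\{Q_c:\,Q_c\sim Q_1^\#\}}f\ =\ \int_{\bigcup\{Q_c\ \mathrm{black}:\ Q_c\subset Q_2^\#\}}f ,
\end{equation*}
which is $O(|Q_2^\#|)$ by combining the cancellation bound $|\int_{Q_2^\#}f|\le C|Q_2^\#|$ of Definition \ref{Def}(d) with the bound $|f|\le1$ off the union of all black cubes. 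What is left is $\sum_{Q_2^\#}\frac{1}{|Q_2^\#|}\,\big|\int_{\bigcup\{Q_c\sim Q_2^\#\}}f\big|\cdot C|Q_2^\#|\le C\int_Q|f_c|$. The square is essential: it is precisely what converts the unbounded chain sum appearing in $Tf(x)$ into a telescoping sum over all black cubes inside the larger parent.
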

\begin{proof} We have
\begin{equation*}
  \int_{Q}|T(f)|^2=\int_Q\sum_{Q_1^\#,Q_2^\#\subset Q}\sum_{\twoline{Q_c^1\sim Q_1^\#}{Q_c^2\sim Q_2^\#}}
  \frac{\mathbbm{1}_{Q_1^\#}\mathbbm{1}_{Q_2^\#}}{|Q_1^\#||Q_2^\#|}\int_{Q_c^1}f\int_{Q_c^2}\overline{f}=(*).
\end{equation*}
Clearly, only intersecting $Q_1^\#,Q_2^\#$ need to be considered, and there are two possibilities:  $Q_1^\#\subseteq Q_2^\#$ or $Q_2^\#\subset Q_1^\#$. We proceed with the first case, the second is similar.
\begin{align*}
  (*) & = \sum_{Q_1^\#\subseteq Q_2^\#\subset Q}\sum_{\twoline{Q_c^1\sim Q_1^\#}{Q_c^2\sim Q_2^\#}}
  \frac{1}{|Q_2^\#|}\int_{Q_c^1}f\int_{Q_c^2}\overline{f} \\
   & = \sum_{Q_2^\#\subset Q}\frac{1}{|Q_2^\#|}\int_{\twoline{\bigcup Q_c - \text{black}}{Q_c\sim Q_2^\#}}f\sum_{Q_1^\#\subseteq Q_2^\#}\int_{\twoline{\bigcup Q_c - \text{black}}{Q_c\sim Q_1^\#}}\overline{f}
\end{align*}
All sums and unions are over essentially disjoint cubes. In particular
\begin{equation*}
  \sum_{Q_1^\#\subseteq Q_2^\#}\int_{\twoline{\bigcup Q_c - \text{black}}{Q_c\sim Q_1^\#}}\overline{f}=\int_{\twoline{\bigcup Q_c-\text{black}}{Q_c\subset Q_2^\#}}\overline{f}.
\end{equation*}
Since
\begin{equation*}
  \Big|\int_{Q_2^\#}\overline{f}\Big|\le C|Q_2^\#|,\qquad\text{$C$ - universal},
\end{equation*}
by the definition of the ``parent'' cubes, and
\begin{equation*}
  \Big|\int_{\twoline{Q_2^\#\setminus\bigcup Q_c}{Q_c-\text{black}}}\overline{f}\Big|\le C|Q_2^\#|,\qquad\text{$C$ - universal},
\end{equation*}
by the definition of the black cubes, we also have
\begin{equation*}
  \Big|\int_{\twoline{\bigcup Q_c-\text{black}}{Q_c\subset Q_2^\#}}\overline{f}\Big|\le C|Q_2^\#|,\qquad\text{$C$ - universal}.
\end{equation*}
This concludes the proof:
\begin{align*}
  (*)&=|(*)|\\
  &\le \sum_{Q_2^\#\subset Q}\frac{1}{|Q_2^\#|}\Big|\int_{\twoline{\bigcup Q_c - \text{black}}{Q_c\sim Q_2^\#}}f \Big|C|Q_2^\#|\\
  &\le C\sum_{Q_2^\#\subset Q}\int_{\twoline{\bigcup Q_c - \text{black}}{Q_c\sim Q_2^\#}}|f|\\
  &=C\int_Q|f_c|.
\end{align*}
\end{proof}
\end{remark}


\begin{thebibliography}{9}
\bibitem{C}  Coifman, R. R., {\it A real variable characterization of $H^p$}, Studia Math., {\bf 51} (1974), pp. 269--274.
\bibitem{D} Davis, B.,{\it On the integrability of the martingale square function}, Israel J. Math., {\bf 8} (1970), pp. 187--190.
\bibitem{H} Herz, C. S., {\it $H^p$ spaces of martingales, $0<p\le1$}, Zeit f. War., {\bf 28} (1974), pp. 189--205.
\bibitem{H1} Herz, C. S., {\it Bounded mean oscillation and regulated martingales}, Trans. Amer. Math. Soc. {\bf 193} (1974), pp. 199--215.
\bibitem{L} Latter, R. H., {\it A decomposition of $H^p(\mathbb R^n)$ in terms of atoms}, Studia Math., {\bf 62} (1978), pp. 92--101.
\bibitem{NT} Naor, A. and Tao, T., {\it Random martingales and localization of maximal inequalities}, J. Func. Anal., {\bf 259} (2010), pp. 731--779.
\bibitem{PZ} Paluszynski, M. and Zienkiewicz, J., {\it A Remark On Atomic Decompositions of
    Martingale Hardy's Spaces}, J. Geom. Anal., {\bf 31} (2021), pp. 8866–-8878.
\bibitem{S} Stein, E. M., Harmonic Analysis: Real-variable Methods, Orthogonality and Oscillatory Integrals, Princeton Univ. Press, Princeton N. J., 1993.
\bibitem{ST} Stein, E. M. and Str\"omberg, J.-O., {\it Behavior of maximal functions in $\er^n$ for
        large $n$}, Ark. Mat. {\bf 21} (1983), pp. 259--269.
\bibitem{W} Weisz, F., {\it Martingale Hardy Spaces and their Applications in Fourier Analysis},
    (Lect. Notes Math., vol. 1568) Berlin, Heidelberg, New York, Springer 1994.
\end{thebibliography}
\end{document}